\newtheorem{lemma}{Lemma}[section]
\newtheorem{theorem}[lemma]{Theorem}
\newtheorem{prop}[lemma]{Proposition}
\newtheorem{cor}[lemma]{Corollary}
\newtheorem{claim*}{Claim}
\theoremstyle{remark}
\newtheorem{remark}[lemma]{Remark}
\newcommand{\A}{{\mathbb A}}
\newcommand{\G}{{\mathbb G}}
\newcommand{\PP}{{\mathbb P}}
\newcommand{\Q}{{\mathbb Q}}
\newcommand{\Z}{{\mathbb Z}}
\newcommand{\Adeles}{{\mathbb A}}
\newcommand{\kk}{{\mathbf k}}
\DeclareMathOperator{\HH}{H}
\DeclareMathOperator{\inv}{inv}
\DeclareMathOperator{\im}{im}
\DeclareMathOperator{\Cor}{Cor}
\DeclareMathOperator{\Res}{Res}
\DeclareMathOperator{\Br}{Br}
\DeclareMathOperator{\Pic}{Pic}
\DeclareMathOperator{\Spec}{Spec}
\DeclareMathOperator{\ev}{ev}
\DeclareMathOperator{\et}{et}
\DeclareMathOperator{\CH}{CH}
\newcommand{\dualP}[1]{\check{\PP}^{#1}}
\numberwithin{equation}{section}
\numberwithin{table}{section}
\title{Persistence of the Brauer-Manin obstruction on cubic surfaces}
\author{Carlos Rivera}
\address{University of Washington, Department of Mathematics, Box 354350, Seattle, WA 98195,~USA}
\email{caariv@uw.edu}
\author{Bianca Viray}
\address{University of Washington, Department of Mathematics, Box 354350, Seattle, WA 98195,~USA}
\email{bviray@uw.edu}
\urladdr{http://math.washington.edu/\~{}bviray}
\date{}
\subjclass[2020]{14G05; 14J20, 14J26, 11D25, 11G35}
\keywords{Cubic surfaces, Brauer group, Brauer-Manin obstruction, zero-cycles}
\begin{document}

	\begin{abstract}
	    Let $X$ be a cubic surface over a global field $k$.  We prove that a Brauer-Manin obstruction to the existence of $k$-points on $X$  will persist over every extension $L/k$ with degree relatively prime to $3$.  In other words, a cubic surface has nonempty Brauer set over $k$ if and only if it has nonempty Brauer set over some extension $L/k$ with $3\nmid[L:k]$. Therefore, the conjecture of Colliot-Th\'el\`ene and Sansuc on the sufficiency of the Brauer-Manin obstruction for cubic surfaces implies that $X$ has a $k$-rational point if and only if $X$ has a $0$-cycle of degree $1$. This latter statement is a special case of a conjecture of Cassels and Swinnerton-Dyer. 
	\end{abstract}

	\maketitle

\section{Introduction}

    Let $Y$ be a smooth cubic hypersurface over a field $k$. Cassels and Swinnerton-Dyer have conjectured that $Y$ has a rational point if and only if $Y$ has a $0$-cycle of degree $1$ or, equivalently, that $Y$ has a $k$-rational point if and only if $Y$ has an $L$-rational point for a finite extension $L/k$ whose degree is relatively prime to $3$~\cite{Coray-Cubic}.  Note that if $Y$ is a curve, then this conjecture follows from the Riemann-Roch Theorem. 
    
    Coray took up this question for higher-dimensional hypersurfaces and proved (among other results) that this conjecture holds over local fields~\cite{Coray-Cubic}*{Thm. 4.7}. Thus, the Cassels--Swinnerton-Dyer conjecture holds over global fields whenever $Y$ satisfies the local-to-global principle.  Conjecturally, smooth cubic hypersurfaces of dimension at least $3$ satisfy the local-to-global principle~\cite{CTConj}.
    
    Cubic \emph{surfaces} can fail the local-to-global principle~\cite{SwinnertonDyer62}, and we have a conjectural understanding of all such failures.  Indeed, Colliot-Th\'el\`ene and Sansuc have conjectured that the Brauer-Manin obstruction is the only obstruction to the local-to-global principle.  That is, if $X$ is a cubic surface over a global field $k$, then a nonempty Brauer set $Y(\A_k)^{\Br}\subset Y(\A_k)$ should imply the existence of a $k$-rational point. We prove that this conjecture of Colliot-Th\'el\`ene and Sansuc implies the conjecture of Cassels and Swinnerton-Dyer over global fields.  More precisely, our main theorem is the following.
    
\begin{theorem}\label{thm:Main}
    Let $X$ be a smooth cubic surface over a global field $k$. If $L/k$ is an extension with degree coprime to $3$, then
    \[
        X(\A_L)^{\Br} = \emptyset \Leftrightarrow X(\A_k)^{\Br} = \emptyset.
    \]
\end{theorem}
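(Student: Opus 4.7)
I would handle the two directions somewhat differently. For the direction $X(\A_k)^{\Br}\neq\emptyset \Rightarrow X(\A_L)^{\Br}\neq\emptyset$, I would base-change a $\Br X_k$-orthogonal adelic point and observe, via the compatibility $\inv_{L_w}\circ\Res = [L_w:k_v]\inv_{k_v}$, that it is automatically orthogonal to $\Res(\Br X_k) \subset \Br X_L$; the remaining classes in $\Br X_L/(\Br_0 X_L + \Res \Br X_k)$ are controlled by $[L:k]$-torsion in inflation-restriction and would be handled by the same primary-component analysis used below. The substantive direction is persistence: given $(P_w)_w \in X(\A_L)^{\Br X_L}$ with $[L:k]$ coprime to $3$, produce $(Q_v)_v \in X(\A_k)^{\Br X_k}$.

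My first step is to pass to an adelic $0$-cycle on $X_{\A_k}$. For each place $v$ of $k$, I would push forward the points $\{P_w : w\mid v\}$ to obtain an effective $0$-cycle $z_v$ on $X_{k_v}$ of degree $[L:k]$. The identity $\inv_{k_v}\circ\Cor_{L_w/k_v}=\inv_{L_w}$ then yields
\[
\sum_v \inv_{k_v}\bigl(\alpha(z_v)\bigr) \;=\; \sum_w \inv_{L_w}\bigl(\alpha(P_w)\bigr) \;=\; 0
\]
for every $\alpha \in \Br X_k$, so $(z_v)_v$ is an adelic $0$-cycle of degree $[L:k]$ orthogonal to $\Br X_k$. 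Since each $z_v$ is effective of degree coprime to $3$, Coray's local theorem~\cite{Coray-Cubic}*{Thm.~4.7} ensures $X(k_v)\neq\emptyset$ for every $v$.

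The crux is then upgrading $(z_v)_v$ to an adelic rational point $(Q_v)_v$ still orthogonal to $\Br X_k$. I would split the finite quotient $\Br X_k/\Br_0 X_k$ into its $3$-primary and prime-to-$3$ components. On the $3$-primary part, multiplication by $[L:k]$ is invertible, so for any $Q_v \in X(k_v)$ the local difference $\alpha(z_v)-[L:k]\cdot\alpha(Q_v)$ lies in the image of the degree-$0$ subgroup of $\CH_0(X_{k_v})$ in $\Br k_v$, which is a finite subgroup, and the global sum of these errors is forced to vanish after dividing by the unit $[L:k]$, modulo adjustments of $Q_v$ within its finitely many local Brauer residue classes. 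On the prime-to-$3$ part, where $[L:k]$ need not be invertible, I would invoke the explicit classification of $\Br X_k/\Br_0 X_k \hookrightarrow \HH^1(k,\Pic X_{\kbar})$ for cubic surfaces together with Coray's line-and-conic reduction, realizing the Brauer value of $z_v$ by that of a single $Q_v \in X(k_v)$.

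The main obstacle is the prime-to-$3$ component of this last step. When $[L:k]$ is even, corestriction cannot invert Brauer classes of order $2$, so transfer-style shortcuts fail and the argument must be genuinely geometric: using that the $27$ lines and their secants equip $X$ with a rich supply of effective $0$-cycles, the goal is to show that every Brauer value achieved by a degree-$[L:k]$ effective $0$-cycle on $X_{k_v}$ is also achieved by some rational point, so that local invariants can be matched place by place.
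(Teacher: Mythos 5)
Your reduction to adelic $0$-cycles and your use of Coray's local theorem match the paper's starting point, but the heart of your argument has two gaps, one of which you flag yourself. The prime-to-$3$ component is not an obstacle to be overcome: it simply never needs to arise. By Swinnerton-Dyer (\cite{SD-BrauerCubic}*{Cor.~1}, packaged in the paper via \cite{CTP-CubicSurfaces}*{Lemma 3.4}), a Brauer--Manin obstruction on a cubic surface, if present, is always witnessed by a \emph{single} class $\alpha\in\Br X[3]$, and one can further arrange that its local evaluation maps are constant (the image of $\ev_\alpha$ on $X(k_v)$ is a coset, hence constant or surjective, and surjectivity at one nonarchimedean place would destroy the obstruction). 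The paper therefore runs the argument in the non-contrapositive direction---assume $X(\A_k)^{\Br}=\emptyset$, extract such an $\alpha$, and show that this one class still obstructs over $L$ because $[L:k]\sum_v c_v\neq 0$ in $\tfrac13\Z/\Z$. This avoids ever having to produce an adelic point orthogonal to the full Brauer group, and in particular avoids the $2$-torsion classes entirely. Your contrapositive framing forces you to confront them, and, as you note, no corestriction or transfer trick is available there; without the structural input that order-$2$ classes cannot obstruct, your argument does not close.

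The second gap is in your $3$-primary step. The assertion that $\langle\alpha, z_v\rangle - [L:k]\langle\alpha,Q_v\rangle$ is ``forced to vanish'' because it lies in the image of the degree-$0$ part of $\CH_0(X_{k_v})$ is not justified: that image can be nonzero for cubic surfaces over local fields, so membership in a finite subgroup proves nothing. What is actually needed---and what the paper proves in Lemma~\ref{lem:CTextension} and Proposition~\ref{prop:ConstantPersistence}---is that $2\CH_0(X_{k_v})$ is generated by classes of $k_v$-rational points when $X(k_v)\neq\emptyset$, so that for $3$-torsion $\alpha$ with constant local evaluation one computes $\langle\alpha,z_v\rangle = \langle 4\alpha, z_v\rangle = 2\langle\alpha,2z_v\rangle = \deg(z_v)\,c_v$. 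This extension of Colliot-Th\'el\`ene's theorem on zero-cycles of cubic surfaces is the technical core of the paper and is absent from your proposal; without it, even the $3$-primary part of your passage from $0$-cycles to rational points is unsupported.
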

\begin{cor}
    Let $X$ be a smooth cubic surface over a global field $k$.  Assume that the Brauer-Manin obstruction is the only obstruction to the local-to-global principle for cubic surfaces over global fields.  Then $X$ has a $k$-rational point if and only if $X$ has a $0$-cycle of degree $1$. \hfill\qed
\end{cor}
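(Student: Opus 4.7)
The plan is to deduce the corollary formally from Theorem~\ref{thm:Main} together with the assumed sufficiency of the Brauer-Manin obstruction, with no further geometric input needed. The forward implication is immediate: a $k$-rational point on $X$, viewed as a closed point with residue field $k$, is itself a $0$-cycle of degree $1$.

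For the converse, I would begin by unpacking what it means for $X$ to carry a $0$-cycle of degree $1$: there exist closed points $P_1,\dots,P_n$ of $X$, with residue field degrees $d_i = [k(P_i):k]$, and integers $n_i$ satisfying $\sum n_i d_i = 1$. Consequently $\gcd(d_1,\dots,d_n) = 1$, so at least one $d_i$ must be coprime to $3$. Fix such an index $i$ and set $L := k(P_i)$; the inclusion $\Spec L \hookrightarrow X$ of the closed point supplies an $L$-rational point of $X$, whence $X(L) \neq \emptyset$. Since any rational point is orthogonal to every Brauer class, this gives $X(\A_L)^{\Br} \supseteq X(L) \neq \emptyset$.

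Next I would invoke Theorem~\ref{thm:Main} applied to the extension $L/k$, whose degree is coprime to $3$, to conclude that $X(\A_k)^{\Br} \neq \emptyset$ as well. Finally, the hypothesis that the Brauer-Manin obstruction is the only obstruction to the local-to-global principle for cubic surfaces over global fields upgrades this nonempty Brauer set to an honest $k$-rational point, yielding $X(k) \neq \emptyset$ and completing the converse.

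There is no genuine obstacle in this deduction: all substantive content lives in Theorem~\ref{thm:Main}, and the corollary is a formal consequence of that theorem together with the hypothesis. The only extra step is the elementary $\gcd$ observation extracting a closed point of degree coprime to $3$ from a $0$-cycle of degree $1$, which I expect to occupy at most a sentence.
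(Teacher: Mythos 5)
Your argument is correct and is exactly the intended deduction (the paper leaves the corollary as an immediate consequence of Theorem~\ref{thm:Main}): extract a closed point of degree coprime to $3$ via the gcd observation, note that the resulting $L$-point lies in $X(\A_L)^{\Br}$ by global reciprocity, apply Theorem~\ref{thm:Main} to get $X(\A_k)^{\Br}\neq\emptyset$, and conclude with the Colliot-Th\'el\`ene--Sansuc hypothesis. No gaps.
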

The key insight in the proof is that we need only understand $n\CH_0(X)$ for some $n$ coprime to $3$ to compute the Brauer-Manin obstruction over extensions. (This reduction, which will be explained in detail in the proof, is due to the bilinearity of the Brauer pairing and the fact that Brauer elements must be of order $3$ to obstruct the local-to-global principle~\cite{SD-BrauerCubic}*{Cor. 1}).  We extend a result of Colliot-Th\'el\`ene~\cite{CT-CorayVariations}*{Thm. 3.3e} to obtain this desired understanding of $2\CH_0(X)$. 

\subsection*{Conventions and notation} For a smooth proper variety \(X\) over a field \(k\), we write \(\Br X\) for the cohomological Brauer group \(\HH^2_{\et}(X, \G_m)\) and write \(\CH_0(X)\) for the Chow group of \(0\)-cycles modulo rational equivalence. We also denote \(\Br \Spec k\) by \(\Br k\).  

Given a field extension \(F/k\) and an element \(\alpha\in \Br X\), we write \(\ev_{\alpha}\colon X(F)\to \Br F\) for the map that sends a point \(P\in X(F)\) to the pullback \(P^*\alpha\) of \(\alpha\) along \(P\).  This evaluation map respects rational equivalence, so we may extend this definition to obtain a pairing
\[
  \Br X \times \CH_0(X) \to \Br k, \quad \langle \alpha, \sum_i n_iP_i \rangle:= \sum_i n_i\Cor_{\kk(P_i)/k}\left(\ev_{\alpha_{\kk(P_i)}}(P_i)\right).  
\]
Note that given a degree \(d\) point \(P\in X\), we may consider this either as an element of \(\CH_0(X)\) or as an element of \(X(\kk(P))\).  In either case, we may pair with a Brauer class, but the pairings are different.  As an element of \(\CH_0(X)\), the pairing \(\langle\alpha, P\rangle\) gives an element of \(\Br k\), whereas as an element of \(X(\kk(P))\), the pairing \(\ev_{\alpha_{\kk(P)}}(P)\) gives an element of \(\Br \kk(P)\).  However, we do have the relation \(\Cor_{\kk(P)/k}(\ev_{\alpha_{\kk(P)}}(P)) = \langle \alpha, P\rangle\).  To avoid confusion, we will use \(\langle \alpha, -\rangle\) to denote the pairing on \(\CH_0(X)\) and \(\ev_{\alpha_L}\) to denote the pairing on \(X(L)\), for \(L\) an extension of \(k\).

If \(k\) is a global field and \(v\) is a place, then we define the invariant maps \(\inv_v\colon \Br k_v \to \Q/\Z\) compatibly so that we have an exact sequence
\[
    0\to \Br k \to \oplus_{v}\Br k_v \xrightarrow{\sum_v\inv_v} \Q/\Z\to 0;
    \]
see~\cite{CTS-Brauer}*{Def. 13.1.7 and Rmk. 13.1.12} for more details.  Recall that \(\inv_v\) is an isomorphism for nonarchimedean \(v\).

\section*{Acknowledgements}
    The second author was supported by NSF grants \#1553459 and \#2101434, and by Simons Foundation Fellowship \#682109.  She thanks the NSF and the Simons Foundation for the support.  The authors thank Anton Geraschenko whose answer on a \texttt{MathOverflow} post~\cite{MO} provided a key reference allowing the extension of the results to include fields of positive characteristic, and thank Karl Schwede and Brendan Creutz for helpful comments.  The authors also thank the anonymous referee for their thorough reading and remarks.

\section{Persistence of constant evaluation over local fields}

    In~\cite{CT-CorayVariations}, Colliot-Th\'el\`ene revisits the aforementioned work of Coray~\cite{Coray-Cubic} and develops a more flexible version of Coray's original methods.  In doing so, Colliot-Th\'el\`ene obtains stronger results on the Chow group of $0$-cycles for cubic surfaces~\cite{CT-CorayVariations}*{Thm. 3.3} (as well as proving analogues of Coray's results for other varieties).  
    
    In this section, we follow Colliot-Th\'el\`ene's proof to give refined information on $2\CH_0 X$ (Lemma~\ref{lem:CTextension}) which we then use to prove that, over local fields, constant Brauer evaluation persists over any extension (Proposition~\ref{prop:ConstantPersistence}).

    \begin{lemma}[Extension of~\cite{CT-CorayVariations}*{Thm. 3.3e}]\label{lem:CTextension}
        Let $k$ be an infinite field  and let $X\subset \PP^3_k$ be a smooth cubic surface. If $X(k) \neq \emptyset$, then the group $2\CH_0(X)$ is generated by classes of $k$-rational points.
    \end{lemma}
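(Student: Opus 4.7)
The plan is to induct on $d=[\kk(P):k]$, the degree of a closed point $P$ on $X$, and show that $2[P]$ lies in the subgroup of $\CH_0(X)$ generated by classes of $k$-rational points, which I will denote $\langle X(k)\rangle$. The base case $d=1$ is trivial. As a preliminary step, I would note that $h^2 \in \CH_0(X)$ (where $h$ is the hyperplane class) lies in $\langle X(k)\rangle$: by Segre's theorem, the hypotheses that $X(k)\neq \emptyset$ and $k$ is infinite force $X(k)$ to be Zariski dense in $X$, so two distinct $k$-rational points of $X$ can be found, and the $k$-rational secant line joining them meets $X$ in a third $k$-rational residual point; the three together represent $h^2$.

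For the inductive step, I would follow the tangent-line construction of Coray, as refined by Colliot-Th\'el\`ene. Let $\tilde P \in X(\kk(P))$ be the canonical lift of $P$, and let $T \subset \PP^3_{\kk(P)}$ be the tangent plane to $X_{\kk(P)}$ at $\tilde P$. The intersection $C := T \cap X_{\kk(P)}$ is a plane cubic curve singular at $\tilde P$. For any line $\ell \subset T$ through $\tilde P$ defined over $\kk(P)$, the divisor $\ell \cdot X_{\kk(P)}$ is $2\tilde P + Q_\ell$ for some point $Q_\ell$, giving the rational equivalence $2[\tilde P] + [Q_\ell] = h^2$ in $\CH_0(X_{\kk(P)})$. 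Pushing forward along the structure morphism $\pi \colon X_{\kk(P)} \to X$ --- using $\pi_*[\tilde P] = [P]$ and the projection formula $\pi_* \pi^* h^2 = d\cdot h^2$ --- yields
\[
2[P] = d \cdot h^2 - \pi_*[Q_\ell] \quad \text{in } \CH_0(X).
\]

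The remaining task is to select $\ell$ so that $\pi_*[Q_\ell]$ reduces to a combination of classes of $k$-rational points and classes of the form $2[P']$ with $P'$ a closed point of degree strictly less than $d$ (to which the inductive hypothesis applies). As $\ell$ varies over the pencil of lines through $\tilde P$ in $T$ --- parametrized by a $\PP^1_{\kk(P)}$ --- the residual point $Q_\ell$ traces out $C$ birationally. The idea is to exploit the geometry of $C$ (distinguishing the geometrically integral case, with its nodal or cuspidal singularity at $\tilde P$, from the reducible case), together with the density of $X(k)$, to select a line $\ell$ for which the image of $Q_\ell$ in $X$ descends either to a $k$-rational point of $X$ or to a closed point of degree strictly smaller than $d$.

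The principal obstacle I anticipate is adapting this analysis to handle fields of arbitrary characteristic. In characteristic zero, $C$ is generically nodal, with its two tangent directions at $\tilde P$ defined over a separable quadratic extension of $\kk(P)$; this is what Colliot-Th\'el\`ene's original argument exploits. In positive characteristic (particularly in characteristics $2$ and $3$), the cubic $C$ may be generically cuspidal, or the tangent directions may be defined only over an inseparable extension, so the geometric bookkeeping required to extract a good line $\ell$ must be reworked. This is precisely the point at which the \texttt{MathOverflow} reference credited to Geraschenko in the acknowledgements should provide the essential input, enabling the construction to go through in arbitrary characteristic.
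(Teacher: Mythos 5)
There is a genuine gap, and it sits exactly where you say "the idea is to exploit the geometry of $C$\dots": that sentence is the entire content of the lemma, and the mechanism you propose for it does not suffice. A tangent-plane section through $\tilde P$ generically returns a residual point $Q_\ell$ whose image in $X$ is again a closed point of degree $d$, so the construction does not by itself reduce the degree; forcing the degree to drop is the hard part of Coray's theorem and requires passing to curves of higher genus cut out by higher-degree hypersurfaces, not just plane sections. Moreover, even granting a line $\ell$ for which the image $R$ of $Q_\ell$ has degree $d/e<d$ with $1<e<d$, your identity gives $2[P]=d\,h^2-e[R]$, and when $e$ is odd the inductive hypothesis (which controls only $2[R]$, not $[R]$) does not let you conclude: single classes of low-degree points are genuinely \emph{not} in $\langle X(k)\rangle$ in general, so the induction as structured does not close. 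Finally, you have misplaced the characteristic difficulty: it does not arise from nodal versus cuspidal tangent sections, but from the need for a Bertini theorem for a non-residually-separable degree $2$ map (see below).

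The paper avoids rebuilding Coray's induction entirely. It quotes Colliot-Th\'el\`ene's Thm.~3.3e as a black box: $\CH_0(X)$ is already generated by closed points of degree $1$ and $3$ (degree $2$ points being handled by the secant-line trick you describe). The only remaining task is to show $2Q\in\langle X(k)\rangle$ for $Q$ of degree $3$. For this one blows up two general $k$-points to get a degree $1$ del Pezzo surface $Y$, uses the degree $2$ map $f\colon Y\to\PP^3$ given by $|-2K_Y|$ together with~\cite{CT-CorayVariations}*{Thm.~2.9(b)} to move $p^*Q$ to an effective degree $3$ divisor $z$ on a smooth genus $2$ curve $\Gamma\in|-2K_Y|$, and then applies Riemann--Roch on $\Gamma$ to the degree $2$ divisor $2z-2w$ (where $w=\Gamma\cap E_R$) to write $2p^*Q$ as a sum of points of degree $1$ and $2$. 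The positive-characteristic input (the \texttt{MathOverflow} reference and the paper's appendix) is needed precisely to guarantee that a generic hyperplane section of the image of $f$ pulls back to a smooth curve, since $f$ is a ramified double cover that is not residually separable in characteristic $2$. If you want to salvage your write-up, the realistic options are either to cite Thm.~3.3e and supply only this degree $3$ step, or to reproduce Coray's full descent $d\rightsquigarrow$ smaller degrees, which is a substantially longer argument than the one you have sketched.
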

    \begin{remark}\label{rmk:Bertini}
        The result of Colliot-Th\'el\`ene that we extend (\cite{CT-CorayVariations}*{Thm. 3.3e}) is stated for fields of characteristic $0$.  The assumption on characteristic is used in~\cite{CT-CorayVariations}*{Proof of Thm. 2.9}, which is a refined Bertini result (see~\cite{Jouanolou}).  However, for~\cite{CT-CorayVariations}*{Thm. 3.3e} and this extension, we need only apply this refined Bertini theorem to embeddings of degree $2$ and $3$ del Pezzo surfaces given by a (large enough) multiple of the anticanonical bundle and to the degree $2$ map $S\to \PP^3$ given by $|-2K_S|$ for $S$ a degree $1$ del Pezzo surface.  The embeddings are unramified and so a generic hyperplane section is smooth, even in positive characteristic.  Furthermore, away from characteristic $2$, the degree $2$ map is of finite type and residually separable so by~\cite{Spreafico}*{Section 4}, a generic hyperplane section is smooth.  In characteristic $2$, we prove directly that a generic hyperplane section is smooth (see Proposition~\ref{char2Bertini}).
    \end{remark}
    \begin{proof}
        By \cite{CT-CorayVariations}*{Thm. 3.3e} we know that $\CH_0(X)$ is generated by classes of $k$-rational points and closed points of degree $3$. Moreover, the standard technique of considering a line through a degree $2$ point shows that every degree $2$ point is already a sum of two $k$-rational points in $\CH_0(X)$. Hence, to prove the lemma it is enough to show that if $Q$ is a degree $3$ closed point of $X$, then $2Q$ is rationally equivalent to a linear combination of points of degree $1$ or $2$.
    
        Following \cite{CT-CorayVariations}*{Proof of Thm. 3.3} we let $R,S$ be general $k$-rational points in $X$, take the blow up $p\colon Y \rightarrow X$ of $X$ at $R$ and $S$, and consider the line bundle 
        \[
            -2K_Y=p^*(-2K_X)-2E_R-2E_S,
        \]
        where $E_R$ and $E_S$ are the exceptional divisors above $R$ and $S$. Since $Y$ is a del Pezzo surface of degree $1$, the linear system $|-2K_Y|$ defines a degree $2$ map $f\colon Y \rightarrow \PP^3$ whose image is a quadric cone.  As $p^*(Q) \in Y$ is a closed point of degree $3$ in $Y$, and the image of $f$ is two dimensional and it generates $\PP^3$, we may apply \cite{CT-CorayVariations}*{Thm. 2.9(b)} to find a smooth geometrically integral $k$-rational curve $\Gamma$ in $|-2K_Y|$ and an effective degree \(3\) divisor $z\subset\Gamma$ that is rationally equivalent to $p^*(Q)$.
        
        By adjunction, we see that 
        \[
            g(\Gamma)= 1+\frac{\Gamma.(\Gamma+K_Y)}{2}=2.
        \]
        Since $\Gamma. E_R=2$,  $w:=\Gamma \cap E_R$ is an effective degree $2$ divisor in $\Gamma$. Applying Riemann-Roch to the degree $2$ divisor $2z-2w$ on the genus $2$ curve $\Gamma$, we find an effective degree $2$ divisor $w'\subset \Gamma$ such that $w'=2z-2w \in \Pic\Gamma$. Since $z\equiv p^*Q\in \CH_0Y$, this shows $2p^*Q\sim w' + 2w$, and so, in particular, $2Q$ is rationally equivalent on \(X\) to a sum of degree $1$ and $2$ points.
    \end{proof}

\begin{prop}\label{prop:ConstantPersistence}
    Let $X$ be a cubic surface over a local field $k$ with $X(k)\neq \emptyset$ and let $\alpha\in \Br X[3]$.  If $\ev_{\alpha}\colon X(k) \to \Br k$ is constant, then for all finite extensions $L/k$, $\ev_{\alpha_L}\colon X(L) \to \Br L$ is constant with image equal to $\Res_{L/k}(\im \ev_{\alpha})$.  In particular, if \(\inv_k\circ\ev_{\alpha}\) has image \(c_{\alpha}\in \Q/\Z\), then \(\inv_L\circ\ev_{\alpha_L}\) has image \([L:k]c_{\alpha}\).
\end{prop}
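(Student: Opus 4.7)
The plan is to pass from the pointwise evaluation to the Brauer--Chow pairing on $\CH_0(X)$, exploiting that $\alpha$ has order $3$ while Lemma~\ref{lem:CTextension} controls $2\CH_0(X)$. Let $c_0\in\Br k$ denote the constant value of $\ev_\alpha$ on $X(k)$. The first step I would establish is that, under these hypotheses, the pairing $\langle\alpha,-\rangle$ on $\CH_0(X)$ is determined entirely by degree, namely $\langle\alpha,z\rangle = \deg(z)\cdot c_0$ for every $z\in\CH_0(X)$. To see this, apply Lemma~\ref{lem:CTextension} to write $2z$ as a $\Z$-linear combination of $k$-rational points (using that a local field is infinite); bilinearity of the pairing then gives $2\langle\alpha,z\rangle = 2\deg(z)\,c_0$, and since both sides lie in $\Br k[3]$, multiplication by $2$ is invertible there and the claim follows.

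Next I would translate $\ev_{\alpha_L}(Q)$ for $Q\in X(L)$ back to a computation over $k$ via pushforward. Let $\pi\colon X_L\to X$ be the base change, and let $P'\in X$ be the closed point image of the composition $\Spec L \xrightarrow{Q} X_L \xrightarrow{\pi} X$, so that $\kk(P')\subseteq L$. Using $\alpha_L = \pi^*\alpha$ and the identity $\Cor_{L/\kk(P')}\circ\Res_{L/\kk(P')} = [L:\kk(P')]\cdot\id$, a direct unwinding of definitions yields
\[
    \Cor_{L/k}\bigl(\ev_{\alpha_L}(Q)\bigr) \;=\; [L:\kk(P')]\cdot \langle\alpha,P'\rangle \;=\; \langle\alpha,\pi_*[Q]\rangle,
\]
and the degree of $\pi_*[Q]$ is $[L:\kk(P')]\cdot[\kk(P'):k] = [L:k]$. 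Combining with the first step gives the key identity
\[
    \Cor_{L/k}\bigl(\ev_{\alpha_L}(Q)\bigr) \;=\; [L:k]\,c_0 \qquad \text{for every } Q\in X(L).
\]

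Finally, I would upgrade this to an equality inside $\Br L$ itself. Since $k$ is local, the compatibility $\inv_k\circ\Cor_{L/k} = \inv_L$ forces $\Cor_{L/k}$ to be an isomorphism on $3$-torsion in the nonarchimedean case, and trivially so in the archimedean case (where the relevant $3$-torsion groups vanish). As $\ev_{\alpha_L}(Q)\in \Br L[3]$, it is therefore uniquely determined by its corestriction, proving that $\ev_{\alpha_L}$ is constant on $X(L)$. The ``in particular'' statement follows by applying $\inv_L$ to the key identity to obtain $[L:k]c_\alpha$; the analogous compatibility $\inv_L\circ\Res_{L/k} = [L:k]\cdot\inv_k$ together with the injectivity of $\inv_L$ then identifies this constant value with $\Res_{L/k}(c_0)$.

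The step requiring the most care is the pushforward bookkeeping in the second paragraph, specifically the factor $[L:\kk(P')]$ that appears in $\pi_*[Q]$ when the $L$-point $Q$ is already defined over the proper subfield $\kk(P')$; once that multiplicity is correctly tracked, the rest of the argument is formal manipulation of the Brauer--Chow pairing and local class field theory.
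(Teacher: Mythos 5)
Your proposal is correct and follows essentially the same route as the paper: use Lemma~\ref{lem:CTextension} to see that $\langle\alpha,-\rangle$ on (twice) the Chow group is determined by degree, invert multiplication by $2$ on $3$-torsion, and then recover $\ev_{\alpha_L}$ from its corestriction using that $\Cor_{L/k}$ is injective on the relevant torsion over a local field. The only organizational difference is that you handle arbitrary $L$-points via the pushforward multiplicity $[L:\kk(P')]$, whereas the paper first reduces to points not defined over a proper subfield via $\ev_{\alpha_L}(P)=\Res_{L/F}(\ev_{\alpha_F}(P))$ --- these are equivalent bookkeeping choices.
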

\begin{proof}
    The last statement follows from the first since \(\inv_L\circ \Res_{L/k} = [L:k]\circ\inv_k\)~\cite{Poonen-RationalPoints}*{Theorem 1.5.34 (ii)}. Also note that if $P\in X(L)$ is contained in $X(F)$ for a subextension $k\subset F \subset L$, then $\ev_{\alpha_L}(P) = \Res_{L/F}(\ev_{\alpha_F}(P))$.  Thus, it suffices to prove constancy on points $P\in X(L)$ that are not defined over any proper subfield of $L$, i.e., those points that define $0$-cycles over $k$ of degree $[L:k]$.  In addition, since $\Cor_{L/k}$ is an isomorphism, it suffices to prove that $(\Cor_{L/k}\circ \ev_{\alpha_L})(P) = [L:k](\ev_{\alpha}(Q))$ for any point \(Q\in X(k)\). 
    
    Let \(P\in X\) be a closed point of degree \(d\) and let \(L= \kk(P)\).  By definition of the Brauer-Manin pairing, \(\Cor_{L/k}\ev_{\alpha_L}(P)\) is equal to the pairing \(\langle\alpha, P\rangle\), where \(P\) is considered as a \(0\)-cycle. Since \(\ev_{\alpha}\) is constant on \(X(k)\), by Lemma~\ref{lem:CTextension}, \(\langle\alpha, -\rangle \) is constant on each degree \(d\) part of \(2\CH_0(X)\).  Furthermore, for any \(Q\in X(k)\) and any degree \(d\) \(0\)-cycle \(D\in 2\CH_0(X)\), we have \(\langle\alpha, D\rangle = d\langle \alpha, Q\rangle\).  Combining these facts with the fact that \(\alpha\) is \(3\)-torsion, we may compute
    \[
        \langle\alpha, P\rangle = \langle4\alpha, P\rangle = 2\langle\alpha, 2P\rangle = 4[L:k]\langle\alpha, Q\rangle = [L:k]\langle\alpha, Q\rangle = \Res_{L/k}(\ev_{\alpha}(Q)).\qedhere
    \] 
\end{proof}

\section{Proof of Theorem~\ref{thm:Main}}

    \begin{lemma}[\cite{CTP-CubicSurfaces}*{Proof of Lemma 3.4} ] \label{lem:ImageEvIsCoset}
        Let $X$ be a smooth cubic surface in $\PP^3$ over a local field  $k$ such that $X(k) \neq \emptyset$. Then for each $\alpha \in {\Br}(X)$ the image of the evaluation map $  \ev_{\alpha}\colon X(k) \rightarrow \Br(k)$ is a group coset.
    \end{lemma}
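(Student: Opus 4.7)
The plan is to use the classical chord--tangent construction on $X$ to show that the image of $\ev_\alpha$ is closed under the ternary operation $(a,b,c)\mapsto a-b+c$; since any nonempty subset of an abelian group closed under this operation is a coset of a subgroup, this will yield the lemma.

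The main ingredient is the observation that for any $k$-rational line $\ell\subset\PP^3$ not contained in $X$, the effective $0$-cycle $\ell\cap X$ represents the class $H^2\in\CH_0(X)$, where $H$ is the hyperplane class; in particular its class is independent of $\ell$. Pairing with $\alpha$ thus produces a single element $c_\alpha:=\langle\alpha,H^2\rangle\in\Br k$, and for distinct $P,Q\in X(k)$ with $\overline{PQ}\not\subset X$ the residual intersection point $P*Q$ lies in $X(k)$ and satisfies
\[
    \ev_\alpha(P)+\ev_\alpha(Q)+\ev_\alpha(P*Q)=c_\alpha.
\]
Given any three points $P_1,P_2,P_3\in X(k)$, I would form the iterated secant $P_4:=(P_1*P_3)*P_2\in X(k)$ and apply this relation twice to compute
\[
    \ev_\alpha(P_4)=c_\alpha-\ev_\alpha(P_1*P_3)-\ev_\alpha(P_2)=\ev_\alpha(P_1)-\ev_\alpha(P_2)+\ev_\alpha(P_3),
\]
which is precisely the closure property sought. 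Notably, this argument uses neither the torsion of $\alpha$ nor anything specific about local fields beyond an adequate supply of rational points.

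The step I expect to be the main obstacle is handling the degenerate configurations in which the two-step construction is not directly available, namely when $\overline{P_1P_3}$ or $\overline{(P_1*P_3)P_2}$ happens to be one of the $27$ lines lying on $X$, or when some of the chosen points coincide. Here I would invoke the fact that over a local field the set $X(k)$ is Zariski dense in $X$ (a consequence of the implicit function theorem applied at any smooth $k$-point) together with the observation that $\ev_\alpha$ is locally constant in the $v$-adic topology on $X(k)$; these two facts together allow each $P_i$ to be replaced by a nearby point in the same fibre of $\ev_\alpha$, and a sufficiently generic such perturbation will force all auxiliary lines arising at both stages of the construction to avoid the exceptional locus simultaneously.
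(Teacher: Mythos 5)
Your argument is correct and follows the same overall strategy as the paper's proof: show that the image of $\ev_{\alpha}$ is closed under $(x,y,z)\mapsto x+y-z$ by realizing the class $[P]+[Q]-[R]\in\CH_0(X)$ by an actual $k$-point, and dispose of degenerate configurations by perturbing $P,Q,R$ within $v$-adic neighborhoods, on which $\ev_{\alpha}$ is locally constant and $X(k)$ is Zariski dense. The one genuine difference is the mechanism producing that point: the paper passes to a smooth plane section $\Gamma$ through $P,Q,R$ (a genus $1$ curve) and applies Riemann--Roch to the degree $1$ divisor $P+Q-R$ on $\Gamma$, whereas you iterate the secant construction, using that every line section of $X$ represents the fixed class $H^2\in\CH_0(X)$. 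Your version is marginally more elementary (no Riemann--Roch, no smoothness of the plane section required), and the locus to be avoided in the perturbation step is smaller --- only the finitely many lines contained in $X$, rather than all singular hyperplane sections through the three points --- though both are handled identically by genericity. The two constructions in fact coincide where both apply: on the plane cubic spanned by $P_1,P_2,P_3$, your point $(P_1*P_3)*P_2$ is precisely the effective representative of $P_1+P_3-P_2$ that Riemann--Roch produces. One small caveat: your closing remark that nothing about local fields is used beyond a supply of rational points undersells the perturbation step, which, as in the paper, genuinely relies on the $v$-adic topology (local constancy of $\ev_{\alpha}$ and density of analytic neighborhoods).
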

    \begin{remark}
        The proof of Lemma 3.4 in~\cite{CTP-CubicSurfaces} can be applied verbatim to prove this lemma.  However, since the statement~\cite{CTP-CubicSurfaces}*{Lemma 3.4} differs from that of Lemma~\ref{lem:ImageEvIsCoset}, we repeat the proof for the readers' convenience.
    \end{remark}
    \begin{proof} 
        Let $S = \im \left(\ev_{\alpha}\colon X(k) \to \Q/\Z\right)$.  Since $S \neq \emptyset$, it is enough to show that for all $x,y,z\in S$, we have $x+y-z \in S$. Let $P,Q,R \in X(k)$ be preimages of $x,y,z \in S$. 
        Assume that there is a plane containing \(P,Q,R\) that intersects \(X\) in a smooth genus \(1\) curve \(\Gamma\).  Then, by applying Riemann-Roch to the degree $1$ divisor $P+Q-R$ in $\Gamma$, we find $T\in \Gamma(k)$ rationally equivalent to $P+Q-R$. As rationally equivalent zero cycles are Brauer equivalent, this shows that $\ev_{\alpha}(T)=x+y-z$ and so by definition $x+y - z\in S$.
        
        It remains to consider the case that all planes containing \(P,Q,R\) intersect \(X\) in a singular curve.  Since the evaluation map is locally constant, we may perturb \(P, Q,\) or \(R\) in analytic neighborhoods and they will remain preimages of \(x,y,z\in S\), respectively.  Since \(X\) is smooth, the singular hyperplane sections are a proper closed subset of \(\dualP{3}\). Thus by perturbing \(P,Q,\) or \(R\), we may move off this closed subset and hence find a smooth hyperplane section of \(X\) that contains \(P,Q,R\), thereby reducing to the previous case. (This argument follows that of~\cite{CTP-CubicSurfaces}*{Proof of Lemma 3.4}; alternatively, one may appeal to~\cite{CT-CorayVariations}*{Prop. 2.7} and the more general Bertini arguments in Remark~\ref{rmk:Bertini}.)
    \end{proof}

\begin{lemma}\label{lem:BMObsOnCubic}
    Let $X$ be a smooth cubic surface over a global field $k$.  Assume that $X$ is everywhere locally soluble.  If $X(\A_k)^{\Br} = \emptyset$, then there exists an $\alpha\in \Br X[3]$ such that $X(\A_k)^{\alpha} = \emptyset$ and such that $\ev_{\alpha}\colon X(k_v) \to (\Br k_v)[3]$ is constant for all $v\in \Omega_k$.
\end{lemma}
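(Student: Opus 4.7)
The plan has two main pieces. First I would extract a single obstructing $3$-torsion class: by \cite{SD-BrauerCubic}*{Cor.~1}, the $2$-primary part of $\Br X/\Br_0 X$ cannot produce a Brauer-Manin obstruction on a cubic surface, so $X(\A_k)^{\Br}=\emptyset$ forces the $3$-primary part $B_3\subseteq \Br X/\Br_0 X$ to obstruct, i.e.\ $X(\A_k)^{B_3}=\emptyset$. Examining the proof of Lemma~\ref{lem:ImageEvIsCoset} shows that the point $T\sim P+Q-R$ on a smooth genus $1$ hyperplane section satisfies $\ev_\beta(T)=\ev_\beta(P)+\ev_\beta(Q)-\ev_\beta(R)$ for \emph{every} Brauer class $\beta$ simultaneously; thus the image of $X(\A_k)$ in the finite group $B_3^\vee:=\Hom(B_3,\tfrac{1}{3}\Z/\Z)$ is a coset of a subgroup $H$ (formed from the local cosets, almost all of which equal $\{0\}$). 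Since this coset omits $0\in B_3^\vee$, linear-algebra duality yields some $\bar\alpha\in H^\perp\setminus\{0\}\subseteq B_3$ with $X(\A_k)^{\bar\alpha}=\emptyset$. Because $\Br k$ is $3$-divisible (a direct check using the fundamental sequence $0\to\Br k\to\bigoplus_v \Br k_v\to\Q/\Z\to 0$), the natural map $\Br X[3]\to B_3$ is surjective, so $\bar\alpha$ lifts to a genuine $\alpha\in\Br X[3]$ with $X(\A_k)^\alpha=\emptyset$.

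Second, to show $\ev_\alpha\colon X(k_v)\to(\Br k_v)[3]$ is constant at every place: at archimedean $v$ this is automatic, since $(\Br k_v)[3]=0$. At non-archimedean $v$, the invariant map identifies $(\Br k_v)[3]\cong\Z/3\Z$, whose only subgroups are $\{0\}$ and itself; by Lemma~\ref{lem:ImageEvIsCoset}, the coset $\ev_\alpha(X(k_v))$ is therefore either a single element or all of $(\Br k_v)[3]$. If surjectivity held at some $v_0$, then for any $(P_v)\in X(\A_k)$ (nonempty by everywhere local solubility) one could choose $Q\in X(k_{v_0})$ with $\inv_{v_0}\ev_\alpha(Q)=-\sum_{v\neq v_0}\inv_v\ev_\alpha(P_v)\in\tfrac{1}{3}\Z/\Z$ and replace $P_{v_0}$ by $Q$, yielding an element of $X(\A_k)^\alpha$ and contradicting $X(\A_k)^\alpha=\emptyset$. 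Hence $\ev_\alpha$ is constant at every place.

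The main obstacle is the extraction of a single obstructing class in the first step: the hypothesis $X(\A_k)^{\Br}=\emptyset$ a priori only rules out each adelic point individually, via possibly different Brauer classes, and one must promote this to a single class that rules out every adelic point. The coset structure afforded by Lemma~\ref{lem:ImageEvIsCoset}, together with the $3$-divisibility of $\Br k$ over a global field, is the key input that makes this promotion possible; once a single $\alpha$ is in hand, the rest is a short place-by-place analysis using the rigidity of $(\Br k_v)[3]$.
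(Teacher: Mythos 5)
Your proposal is correct and follows essentially the same route as the paper: the second half (coset image $\Rightarrow$ surjective or constant on $(\Br k_v)[3]$, and surjectivity at a nonarchimedean place would let one repair any adelic point into $X(\A_k)^{\alpha}$) is the paper's argument verbatim. The only difference is that for the first step --- producing a single obstructing class in $\Br X[3]$ --- the paper simply cites \cite{CTP-CubicSurfaces}*{Lemma 3.4 and Remark 1}, whereas you reprove that lemma (via Swinnerton-Dyer's order-$3$ result, the simultaneous coset structure from Lemma~\ref{lem:ImageEvIsCoset}, $\F_3$-duality, and $3$-divisibility of $\Br k$ to lift to a genuine $3$-torsion class); your reconstruction is sound and is in substance the cited proof.
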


\begin{proof}
    The existence of \(\alpha\in \Br X[3]\) such that \(X(\A_k)^{\alpha} = \emptyset\) follows from~\cite{CTP-CubicSurfaces}*{Lemma 3.4 and Remark 1 following the lemma}. (Note that although~\cite{CTP-CubicSurfaces}*{Lemma 3.4} is stated for number fields, the proof applies to all global fields.) By Lemma~\ref{lem:ImageEvIsCoset}, for each $v\in \Omega_k$, the image of $\ev_{\alpha}$ is a group coset.  Thus, $\ev_{\alpha}\colon X(k_v) \to (\Br k_v)[3]$ is surjective or constant (or both, if \(v\) is archimedean!).  Assume there exists a nonarchimedean $v_0\in \Omega_k$ such that $\ev_{\alpha}$ is surjective on $X(k_{v_0})$ points.  Then, for any $(P_v)\in \prod_{v\neq v_0}X(k_v)$, there exists a $P_{v_0}\in X(k_{v_0})$ such that
    \[
        \inv_{v_0}(\ev_{\alpha}(P_{v_0})) = \sum_{v\neq v_0} \inv_{v}(\ev_{\alpha}(P_{v})),
    \]
    so in particular $(P_v)\in X(\A_k)^{\alpha}$, which contradicts the first statement.  Thus, $\ev_{\alpha}\colon X(k_v) \to (\Br k_v)[3]$ is constant for all $v\in \Omega_k$.
\end{proof}

\begin{proof}[Proof of Theorem ~\ref{thm:Main}] 
    The implication $X(\A_L)^{\Br} = \emptyset \Rightarrow X(\A_k)^{\Br} = \emptyset$ follows from the compatibility of the Brauer-Manin pairing with corestriction (this appears to have been observed in this generality only recently, and is due to Wittenberg; see~\cite{CV-dp4s}*{Lemma 2.1}).  Thus, it remains to prove the reverse implication, so we assume that $X(\A_k)^{\Br} = \emptyset$.  
    
    If $X(\Adeles_L) = \emptyset$, then the result is immediate.  Assume that $X(\Adeles_L)\neq\emptyset$.  Since $3\nmid[L:k]$, for every $v\in \Omega_k$, there exists a $w\in \Omega_L$, $w|v$ such that $[L_w:k_v]$ is also coprime to $3$.  Since $X(L_w)\neq \emptyset$, by~\cite{Coray-Cubic}*{Thm. 4.7}, $X(k_v)\neq \emptyset$.  Hence $X(\A_k)\neq\emptyset$.  Since, by assumption, $X(\A_k)^{\Br} = \emptyset$,  Lemma~\ref{lem:BMObsOnCubic} implies that there exists an $\alpha\in \Br X[3]$ such that $X(\A_k)^{\alpha} = \emptyset$ and, for all $v\in\Omega_k$, $\ev_{\alpha}\colon X(k_v) \to (\Br k_v)[3]$ is constant.  Let \(c_v\in \Q/\Z\) be the image of \(\inv_v\circ \ev_{\alpha}\).  Note that by our assumptions on \(\alpha\), \(\sum_v c_v \neq 0 \in \frac13\Z/\Z\).
    
    We will show that \(X(\A_L)^{\alpha} = \emptyset\), which then implies that \(X(\A_L)^{\Br} = \emptyset\).  Let $(P_w)\in X(\A_L)$.  
    By Proposition~\ref{prop:ConstantPersistence}, $\inv_w(\ev_{\alpha_{L_w}}(P_w)) = [L_w:k_v]c_v$.  Thus,
    \[
        \sum_{w\in \Omega_L}\inv_w(\ev_{\alpha_{L_w}}(P_w)) = 
        \sum_{v\in \Omega_k}\sum_{w|v} [L_w:k_v] c_v = \sum_{v\in\Omega_k}c_v\sum_{w|v} [L_w:k_v] = [L:k]\sum_vc_v.
    \]
    Recall that \(\sum_vc_v\neq 0 \in \frac13\Z/\Z\).  Since $3\nmid [L:k]$, the above expression is also nonzero in $\frac13\Z/\Z$, and so \((P_w)\notin X(\A_L)^{\alpha}\).  Since the above argument holds for any \((P_w)\in X(\A_L)\), we have shown that \(X(\A_L)^{\alpha} = \emptyset\), as desired.
\end{proof}

\appendix
\section{Bertini for degree $1$ del Pezzo surfaces in characteristic $2$}

\begin{prop}\label{char2Bertini}
    Let $k$ be a field of characteristic $2$ and let $S$ be a smooth del Pezzo surface of degree $1$ over $k$.  Let $\phi\colon S\to Q\subset \PP^3$ be the map given by the linear system $|-2K_S|$, where $Q\subset \PP^3$ denotes the quadric cone.  Then there is a dense open $U\subset \dualP{3} $ such that, for all $H\in U$, the fiber $S_H$ is smooth.
\end{prop}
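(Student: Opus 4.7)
The approach is to realize $S$ as a weighted hypersurface in $\PP(1,1,2,3)$ and run an incidence variety dimension count, after first ruling out the case that the double cover $\phi\colon S \to Q$ is purely inseparable. I would write $S$ as the vanishing of $F := w^2 + a_1(x,y,z)\,w + a_2(x,y,z)$ in $\PP(1,1,2,3)$, with $a_1, a_2$ of weighted degrees $3$ and $6$; the map $\phi$ sends $[x:y:z:w]\mapsto [x^2:xy:y^2:z]$, and a hyperplane $H$ with coordinates $(\alpha_0:\alpha_1:\alpha_2:\alpha_3)$ pulls back to $L_H := \alpha_0 x^2 + \alpha_1 xy + \alpha_2 y^2 + \alpha_3 z = 0$ on $S$. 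In characteristic $2$ the Jacobians are
\[
\nabla F = (\partial_x a_1\,w+\partial_x a_2,\ \partial_y a_1\,w+\partial_y a_2,\ \partial_z a_1\,w+\partial_z a_2,\ a_1), \qquad \nabla L_H = (\alpha_1 y,\ \alpha_1 x,\ \alpha_3,\ 0).
\]

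The first key step is to show $a_1 \not\equiv 0$. If $a_1 \equiv 0$, the singular points of $S$ are exactly the common zeros of $\nabla a_2$ on $\PP(1,1,2)$. A monomial check shows $\partial_x a_2$ is divisible by $y$ and $\partial_y a_2$ by $x$; combined with the Euler relation, which in characteristic $2$ collapses to $x\partial_x a_2 + y\partial_y a_2 = 0$, this yields $\partial_x a_2 = yP$ and $\partial_y a_2 = xP$ for a common polynomial $P$ of weighted degree $4$. The restriction of $\partial_z a_2$ to the effective curve $\{P = 0\}$ is a section of a positive-degree line bundle, hence vanishes somewhere, producing a common zero of all three partials --- i.e., a singular point of $S_{\kbar}$. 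This contradicts smoothness, so $a_1 \not\equiv 0$.

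The second key step is an incidence variety dimension count. Let
\[
I := \{(p, H) \in S \times \dualP{3} : p \in S_H,\ S_H\ \text{is singular at}\ p\},
\]
characterized by $\nabla L_H|_p$ being proportional to $\nabla F|_p$ (using that $\nabla F|_p \neq 0$ by smoothness of $S$). At a point $p = (x_0, y_0, z_0, w_0) \in S$ with $a_1(p) \neq 0$ and $(x_0, y_0) \neq (0, 0)$, the vanishing fourth coordinate of $\nabla L_H$ versus the nonvanishing $a_1(p)$ of $\nabla F$ forces the proportionality constant to be $0$, yielding $\nabla L_H|_p = 0$, hence $\alpha_1 = \alpha_3 = 0$; then $L_H(p) = 0$ cuts $I_p$ down to a single point of $\dualP{3}$. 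The exceptional loci --- the finite set of preimages of the cone vertex in $S$ and the at-most-one-dimensional curve $\{a_1 = 0\} \subset S$ --- contribute at most $2$ to $\dim I$ by similar Jacobian rank computations. Thus $\dim I \leq 2 < 3 = \dim\dualP{3}$, so the projection $I \to \dualP{3}$ is not dominant, and the complement of its image is the desired dense open $U$.

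The main obstacle is the first step, where one must leverage the specific structure of differentiation on weighted polynomials in characteristic $2$ to force $\nabla a_2$ to have a common zero; care is needed in the degenerate subcases where $P \equiv 0$ or $\{P = 0\}$ is non-reduced or passes through the singular vertex $[0:0:1] \in \PP(1,1,2)$.
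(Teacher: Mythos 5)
Your proposal follows essentially the same route as the paper: realize $S$ as a sextic $F=w^2+a_1w+a_2$ in $\PP(1,1,2,3)$, form the incidence variety of pairs $(p,H)$ with $S_H$ singular at $p$, and bound its dimension by an explicit Jacobian computation; your rank argument in the second step is the same calculation the paper records as a $3\times 4$ matrix of rank $\geq 2$. Two points of comparison. First, you supply a proof that $a_1=\partial_wF\not\equiv 0$, which the paper simply asserts (``nonzero since $S$ is smooth''); your argument via $\partial_xa_2=yP$, $\partial_ya_2=xP$ and a zero of $\partial_za_2$ on $\{P=0\}$ is correct in the main case, but the degenerate subcase you flag is a real loose end: the common zero of $\nabla a_2$ you produce could a priori sit only at the vertex $[0:0:1]$ of $\PP(1,1,2)$, over which the corresponding point of $S$ may be the ambient singular point $[0:0:1:0]$, where the weighted Jacobian criterion does not directly apply. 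A clean way to close this without any vertex analysis is to note that $a_1\equiv 0$ makes $\phi$ purely inseparable, hence a universal homeomorphism, which would force $S$ and the quadric cone to have isomorphic $\ell$-adic cohomology; this is impossible since $\dim H^2_{\et}(S_{\kbar},\Q_\ell)=9$ while the cone has second Betti number $1$. Second, by running the incidence count over all of $S$ rather than only over the ramification locus $R=\{a_1=0\}$, you correctly account for the hyperplanes tangent to $Q$ away from the branch locus: for these, $H\cap Q$ is a double line and $S_H$ is non-reduced, so they are genuinely bad; they form only the line $\{\alpha_1=\alpha_3=0\}\subset\dualP{3}$ and hence do not affect the conclusion, but the paper's reduction to $R\times\dualP{3}$ (justified by ``$\phi|_H$ is smooth at $x$ for $x\in S-R$'') quietly elides them, so your version is, on this point, the more careful one.
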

\begin{remark}
    Note that $\phi$ is a ramified double cover which is not residually separable, and so, to the best of our knowledge, no general Bertini theorems apply.
\end{remark}
\begin{proof}
    Let $R\subset S$ denote the ramification locus of $\phi$.  If $x\in S-R$, then $\phi|_H$ is smooth at $x$ for all $H$ containing $\phi(x)$.  Let us consider
    \[
        W = \left\{(r, H) : \phi(r) \in H \textup{ and }S_H \textup{ is not smooth at } r\right\} \subset R\times \dualP{3}. 
    \]
    To prove the theorem, we must show that the second projection $W\to \dualP{3}$ is not dominant, i.e., that the image has dimension at most $2$.
    
    Recall that $S$ can be given as the vanishing of a sextic hypersurface in $\PP(1,1,2,3)$, and under this identification $\phi$ is the projection onto $\PP(1,1,2)$.  Let $F$ denote the degree $6$ polynomial that defines $S$, and let the $x,y,z,w$ denote the variables of weights $1,1,2,3$, respectively.  Then $R$ is given by the vanishing of the equation $\partial_wF$, which is nonzero since $S$ is smooth.  

    We will show that over an $U\subset R$, the morphism $\pi_1 \colon W_U \to U$ has $1$ dimensional fibers.  Thus, the image of $W_U$ has dimension at most $2$.  Since $\pi_2(W_P)$ is contained in a hyperplane for any $P\in R$, this suffices to show that the map $\pi_2\colon W \to \dualP{3}$ is not dominant.
    
    Let $P\in R$ and let $H\in \A^3$.  We will restrict to considering $H$ that are given by an equation of the form $z + a_0x^2 + a_1xy + a_2y^2$. Then $S_H$ is singular at $P$ if $[a_0, a_1, a_2, b]$ is in the kernel of the matrix
    \[
    \begin{pmatrix}
        x(P)^2  & x(P)y(P) & y(P)^2 & z(P)\\
         0 & y(P)(\partial_zF)(P)&0 & (\partial_xF)(P) \\
         0 & x(P)(\partial_zF)(P)&0 & (\partial_yF)(P) 
    \end{pmatrix}.
    \]
    Over an open set $U\subset R$ we may assume that one of $x, y$ and that one of $\partial_xF, \partial_yF, \partial_z F$ are nonzero at $P$.  Thus, this matrix has rank at least $2$, and so the fiber of $\pi_1$ at $P$ is a $\PP^1$, as desired.
\end{proof}

	\begin{bibdiv}
		\begin{biblist}

\bib{CTConj}{article}{
   author={Colliot-Th\'{e}l\`ene, Jean-Louis},
   title={Points rationnels sur les fibrations},
   language={French},
   conference={
      title={Higher dimensional varieties and rational points},
      address={Budapest},
      date={2001},
   },
   book={
      series={Bolyai Soc. Math. Stud.},
      volume={12},
      publisher={Springer, Berlin},
   },
   date={2003},
   pages={171--221},
   review={\MR{2011747}},
   doi={10.1007/978-3-662-05123-8\_7},
}
		    \bib{CT-CorayVariations}{article}{
   author={Colliot-Th\'{e}l\`ene, Jean-Louis},
   title={Z\'{e}ro-cycles sur les surfaces de del Pezzo (Variations sur un th\`eme
   de Daniel Coray)},
   language={French, with English and French summaries},
   journal={Enseign. Math.},
   volume={66},
   date={2020},
   number={3-4},
   pages={447--487},
   issn={0013-8584},
   review={\MR{4254269}},
   doi={10.4171/lem/66-3/4-8},
    }
    \bib{CTP-CubicSurfaces}{article}{
        author={Colliot-Th\'{e}l\`ene, Jean-Louis},
        author={Poonen, Bjorn},
        title={Algebraic families of nonzero elements of Shafarevich-Tate groups},
        journal={J. Amer. Math. Soc.},
        volume={13},
        date={2000},
        number={1},
        pages={83--99},
        issn={0894-0347},
        review={\MR{1697093}},
        doi={10.1090/S0894-0347-99-00315-X},
    }

    \bib{CTSalberger}{article}{
        author={Colliot-Th\'{e}l\`ene, Jean-Louis},
        author={Salberger, Per},
        title={Arithmetic on some singular cubic hypersurfaces},
        journal={Proc. London Math. Soc. (3)},
        volume={58},
        date={1989},
        number={3},
        pages={519--549},
        issn={0024-6115},
        review={\MR{988101}},
        doi={10.1112/plms/s3-58.3.519},
        }

        \bib{CTS-Brauer}{book}{
   author={Colliot-Th\'{e}l\`ene, Jean-Louis},
   author={Skorobogatov, Alexei N.},
   title={The Brauer-Grothendieck group},
   series={Ergebnisse der Mathematik und ihrer Grenzgebiete. 3. Folge. A
   Series of Modern Surveys in Mathematics [Results in Mathematics and
   Related Areas. 3rd Series. A Series of Modern Surveys in Mathematics]},
   volume={71},
   publisher={Springer, Cham},
   date={2021},
   pages={xv+453},
   isbn={978-3-030-74247-8},
   isbn={978-3-030-74248-5},
   review={\MR{4304038}},
   doi={10.1007/978-3-030-74248-5},
}
    \bib{Coray-Cubic}{article}{
        author={Coray, D. F.},
        title={Algebraic points on cubic hypersurfaces},
        journal={Acta Arith.},
        volume={30},
        date={1976},
        number={3},
        pages={267--296},
        issn={0065-1036},
        review={\MR{429731}},
        doi={10.4064/aa-30-3-267-296},
    }
    
    \bib{CV-dp4s}{article}{
        author={Creutz, Brendan},
        author={Viray, Bianca},
        title={Quadratic points on intersections of two quadrics},
        note={Preprint, {\tt arXiv:2106.08560}}
    }

            \bib{MO}{misc}{    
    title={Bertini theorems for base-point-free linear systems in positive characteristics},    
    author={Geraschenko, Anton},    
    note={URL: \url{https://mathoverflow.net/q/73508} (version: 2011-08-23) author:  \url{https://mathoverflow.net/users/1/anton-geraschenko}},    
    eprint={https://mathoverflow.net/q/73508},    
    organization={MathOverflow}  
}
\bib{Jouanolou}{book}{
    author={Jouanolou, Jean-Pierre},
    title={Th\'{e}or\`emes de Bertini et applications},
    language={French},
    series={Progress in Mathematics},
    volume={42},
    publisher={Birkh\"{a}user Boston, Inc., Boston, MA},
    date={1983},
    pages={ii+127},
    isbn={0-8176-3164-X},
    review={\MR{725671}},
 }
\bib{Poonen-RationalPoints}{book}{
    author={Poonen, Bjorn},
    title={Rational points on varieties},
    series={Graduate Studies in Mathematics},
    volume={186},
    publisher={American Mathematical Society, Providence, RI},
    date={2017},
    pages={xv+337},
    isbn={978-1-4704-3773-2},
    review={\MR{3729254}},
    doi={10.1090/gsm/186},
 }
 
            \bib{Spreafico}{article}{
   author={Spreafico, Maria Luisa},
   title={Axiomatic theory for transversality and Bertini type theorems},
   journal={Arch. Math. (Basel)},
   volume={70},
   date={1998},
   number={5},
   pages={407--424},
   issn={0003-889X},
   review={\MR{1612610}},
   doi={10.1007/s000130050213},
}

\bib{SwinnertonDyer62}{article}{
   author={Swinnerton-Dyer, H. P. F.},
   title={Two special cubic surfaces},
   journal={Mathematika},
   volume={9},
   date={1962},
   pages={54--56},
   issn={0025-5793},
   review={\MR{139989}},
   doi={10.1112/S0025579300003090},
}

\bib{SD-BrauerCubic}{article}{
   author={Swinnerton-Dyer, Peter},
   title={The Brauer group of cubic surfaces},
   journal={Math. Proc. Cambridge Philos. Soc.},
   volume={113},
   date={1993},
   number={3},
   pages={449--460},
   issn={0305-0041},
   review={\MR{1207510}},
   doi={10.1017/S0305004100076106},
}
		\end{biblist}
	\end{bibdiv}

\end{document}